\documentclass{tran-l}
\usepackage[centertags]{amsmath}
\usepackage{amsfonts}
\usepackage{amssymb}
\usepackage{amsthm}
\usepackage{newlfont}
\usepackage{lscape}
\usepackage{graphicx}
\hfuzz2pt
\newlength{\defbaselineskip}
\setlength{\defbaselineskip}{\baselineskip}
\newcommand{\setlinespacing}[1]%
          {\setlength{\baselineskip}{#1 \defbaselineskip}}


 \newcommand{\eps}{\varepsilon}
 \newcommand{\To}{\longrightarrow}
 \newcommand{\s}{\subseteq}
 \newcommand{\ep}{\epsilon}

 \newcommand{\gm}{\gamma}
 \newcommand{\Gm}{\Gamma}
 \newcommand{\Lm}{\lambda}

 \newcommand{\ten}{\otimes}

 \newcommand{\h}{\mathcal{H}}
 
 \newcommand{\rl}{\mathbb{R}}
 \newcommand{\Complex}{\mathbb{C}}

 \newcommand{\q}{\mathbb{Q}}

  \newcommand{\Z}{\mathbb{Z}}

 \newcommand{\od}{\mathcal{O}}

 \newcommand{\bth}{\begin{thm}}
 \newcommand{\brm}{\begin{rem}}
 \newcommand{\erm}{\end{rem}}
 \newcommand{\bcor}{\begin{cor}}
 \newcommand{\ecor}{\end{cor}}
 \newcommand{\ed}{\end{document}}
 \newcommand{\bmb}{\begin{mbs}}
 \newcommand{\emb}{\end{mbs}}
 
 \newcommand{\beq}{\begin{eqnarray*}}
 \newcommand{\eeq}{\end{eqnarray*}}

\theoremstyle{plain}
\newtheorem{thm}{Theorem}
\newtheorem{cor}[thm]{Corollary}
\newtheorem{lem}[thm]{Lemma}
\newtheorem{prop}[thm]{Proposition}

\theoremstyle{definition}
\newtheorem{defn}[thm]{Definition}
\newtheorem{rem}[thm]{Remark}

\numberwithin{equation}{section}


\begin{document}

\title[\noindent{\it Generators of Arithmetic Quaternion Groups and...}]{Generators of Arithmetic Quaternion Groups and a Diophantine Problem}

\author{Majid Jahangiri}

\begin{abstract}
 Let $p$ be a prime and $a$ a quadratic non-residue $\bmod~ p$.  Then the set of integral solutions of the diophantine equation $x_0^2 - ax_1^2 -px_2^2 + apx_3^2=1$ form a cocompact discrete subgroup $\Gamma_{p,a}\subset SL(2,\mathbb{R})$ and is commensurable with the group of units of an order in a quaternion algebra over $\mathbb{Q}$.  The problem addressed in this paper is an estimate for the traces of a set of generators for $\Gamma_{p,a}$.  Empirical results summarized in several tables show that the trace has significant and irregular fluctuations which is reminiscent of the behavior of the size of a generator for the solutions of Pell's equation.  The geometry and arithmetic of the group of units of an order in a quaternion algebra play a key role in the development of the code for the purpose of this paper.
\end{abstract}
\maketitle

\setlinespacing{1}


\section*{Introduction}
The solutions of the Pell's equation $x^2-dy^2=1$ form a discrete subgroup of an algebraic torus and is isomorphic to
$\mathbb{Z}$.  An estimate of the form $d^{\sqrt{d}}$ for the size of a generator of this
group is given by theorems of Schur and Siegel.
This estimate reflects the ``worst case" scenario and empirical results show that the actual size of the generator is {\it often} much smaller.
Here we consider the
diophantine equation $x_0^2-ax_1^2-px_2^2+apx_3^2=1$, where $a$
is a quadratic non-residue mod $p$.  This diophantine equation is analogous to Pell's equation in the sense that the integral solutions of
this equation form an arithmetic subgroup $\Gamma_{p,a}$ of $SL(2,\mathbb{R})$ and is commensurable with
the group of units of an order in a quaternion algebra over $\mathbb{Q}$. But unlike the case of Pell's equation, this
is a non-commutative (finitely generated) group and the question analogous to the size
of the generator for Pell's equation is an estimate for $x_0$ (or equivalently the traces) for a set of generators for this group. In this note we obtain
empirical estimates for these quantities for a set of $p$'s and
$a$'s, and according to the discriminant of the corresponding
quaternion algebras. These results are displayed in Figures 1-4 in \S 5 which confirm the rather irregular behavior of the size and number of generators as a function of the discriminant.

In order to find a set of generators for the group $\Gamma_{p,a}$ it is necessary to invoke the geometry of the fundamental domain for the action of the group on the Poincar\'e disc.  Once a fundamental domain is obtained a set of generators for the group can be exhibited and the main difficulty in the computer implementation is to make sure that a precise fundamental domain has been explicitly constructed.  The key idea is due to Hutchinson (in a 1907 paper [Ht]) who introduced a very useful ordering of the group elements and a construction for a fundamental domain which was later popularized by Ford and is usually known after the latter.  Hutchinson notes that his method is different from those in Fricke and Klein and lends itself much better to numerical implementation and explicit constructions.  Johansson noticed that by using only the geometry of a Ford fundamental domain (in the cocompact case) one can give an estimate for the size of generators for $\Gamma_{p,a}$ which is a cocompact subgroup in our case since $a$ is a quadratic nonresidue mod $p$.

The problem considered in this note has application to the explicit construction of the fundamental group of a compact orientable surface as a group of $2\times 2$ matrices which is often encountered by students of mathematics.  Poincar\'e's geometric construction does not provide a discrete subgroup of $PSL(2,\mathbb{R})$ sufficiently explicitly for displaying the desired matrix representation.  This article in fact grew out of an attempt to provide a method amenable to a computer implementation for explicitly exhibiting sets of generators for the fundamental groups of surfaces defined as arithmetic quotients of the upper half plane $\mathcal{H}$.

In sections 1 and 2 basic relevant facts about quaternion algebras
and their orders are recalled to fix notation and the geometry of the fundamental domain is discussed in section 3.  An account of the relevant aspects of the works Hutchinson and Johansson is given in \S 4 and applied to the case under consideration in \S 5.

It should be pointed that in a series of papers, J.H.H. Chalk [Ch-K \& Ch-A] obtained estimates for the size of generators for solutions of quadratic forms based on methods of analytic number theory.  However his estimates, when specialized to our case, are crude compared to those that one obtains by the method of Johansson.  The method of Corrales et al [Co] has similarities to that of Johansson but the work of the latter gives better effective bounds.  In [A-B] a method is described for explicitly exhibiting a set of generators for the group of units of Eichler orders in some special classes of quaternion algebras.

\section{Quaternion Algebras}
\begin{defn}
Let $K$ be a field with characteristic $\neq 2$. $H$ is a quaternion algebra over $K$ if it is a four dimensional algebra over $K$ with a basis $\{1,i,j,ij\}$ satisfying the product rules
  \begin{eqnarray*}
    i^2=a,~~~ j^2=b,~~~ ij=-ji,~~~~\;\;\;\; a,b\in K^*.
  \end{eqnarray*}
\end{defn}
Then it is well known that $H$ is a central simple $K$--algebra, and is denoted by $H=\left(\frac{a,b}{K}\right)$. There is an involution $h\mapsto \bar h$ taking $h=a+ib+jc+ijd$ to $\bar h=a-ib-jc-ijd$. The reduced trace and reduced norm of $h\in H$ are defined as:
\begin{eqnarray*}
  \hbox{{\rm tr}}(h)=h+\bar h , ~~~~~ n(h)=h\bar h
\end{eqnarray*}

By the Noether-Skolem Theorem for every separable quadratic
$K$--subalgebra $L$ of $H$ we have $H=L+L\omega$, for some
$\omega\in H$ with following properties:
\begin{align*}
\omega^2=&\theta\in K^*\\
\omega l=&\bar l\omega,~~ \hbox{ for all }l\in L.
\end{align*}
We refer to this representation as $H=\{L,\theta\}$.

If $E/K$ is any field extension, we have $\left(\frac{
a,b}{K}\right)\ten E=\left(\frac{a,b}{E}\right)$, and $\{L,\theta\}\ten E=\{L\ten E,\theta\}$. If $H\ten E\cong M(2,K)$ we say that $E$ is a splitting field of $H$. One can prove that a quadratic field extension $E/K$ splits a division quaternion algebra $H$ if and only if $E$ is $K$--isomorphic to a maximal subfield of $H$ containing $K$.

In this report, we consider the case where $K$ is a number field with the ring of integers $R$. For every place $v$ of $K$, let $K_v$ denote  the completion of $K$ at $v$.

\begin{defn}
  Let $H$ be a quaternion $K$--algebra. Then $H_v:=K_v\ten H$ is a quaternion $K_v$--algebra. If $H_v$ is a division algebra, we say that $H$ is ramified at $v$; otherwise we  say $H$ splits at $v$.
\end{defn}

\bth
There are exactly two quaternion algebras over $K_v$ up to isomorphism; one is the matrix algebra $M(2,K_v)$, and the other is $\mathbb{H}_v=\{L_{nr},\pi\}$, where $L_{nr}$ is the unique unramified quadratic extension of $K_v$, and $\pi$ is a uniformizer of $K_v$.
\end{thm}
For $H=\left(\frac{a,b}{K}\right)$ and $v$ a place of $K$, the Hasse invariant is defined as
\begin{eqnarray*}
  \ep(a,b)_v=\left\{
               \begin{array}{ll}
                 1, & \hbox{ if $H$ splits at $v$  ;}\\
                 -1, & \hbox{ if $H$ is ramified at $v$.}
               \end{array}
             \right.
\end{eqnarray*}
 If $K=\q$, the Hasse invariant coincides with the Hilbert symbol $(a,b)_v$:
\begin{eqnarray*}
  (a,b)_v=\left\{
            \begin{array}{ll}
              1, & \hbox{ if $X^2-aY^2-bZ^2$ is isotropic over $K_v$;} \\
              -1, & \hbox{ otherwise.}
            \end{array}
          \right.
\end{eqnarray*}
For computing $(a,b)_p$, the following properties of the Hilbert symbol are useful:\\
\begin{enumerate}
  \item $(a,bc)_p=(a,b)_p(a,c)_p$.
  \item if $p$ is a real place, then $(a,b)_p=-1$ iff  $a<0,~b<0$. In the case $(a,b)_p=-1$ we say that $H$ is definite, and otherwise $H$ is indefinite.
  \item If $p\neq 2$ and $a$ and $b$ are co-prime, then $(a,b)_p =\left\{
                                           \begin{array}{ll}
                                             1, & \hbox{if } a,b\in R^*_p~\\&(\hbox{or}~ p\nmid a,p\nmid b); \\
                                             \left(\frac{a}{p}\right), & \hbox{if }p\nmid a, p\mid b.
                                           \end{array}
                                         \right.$
  \item The number of places that is ramified in $H$ is finite and $\prod_p(a,b)_p=1,\,\,$ $\,\forall a,b\in K^*$, so that $(a,b)_2=\prod_{p\neq 2} (a,b)_p$
\end{enumerate}
\bth
\begin{enumerate}
 \item Two quaternion $K$--algebras are isomorphic if and only if they are ramified at the same places.
  \item Given an even number of non-complex places of $K$, there exist a quaternion $K$--algebra that ramifies exactly at these places.
\end{enumerate}
\end{thm}
\begin{proof} See [M-R] Theorem 7.3.6.\end{proof}

\begin{defn}
The (reduced) discriminant $d_H$ of a quaternion algebra $H/K$ is the product of prime ideals of $R$ that ramify in $H$. We will refer to the set of ramified primes by Ram$(H)$.
\end{defn}

By the Hilbert Reciprocity Law Ram$(H)$ is finite of even cardinality.  In the case of $K=\q$, $d_H$ is an integer, which if $H$ is definite, is a product of an odd number of distinct primes, and if $H$ is indefinite, is a product of an even number of distinct primes. In this paper, we mainly concerned with the indefinite cases.

Let $H=\left(\frac{a,b}{\q}\right)$ be an indefinite quaternion algebra, with $a>0$. We fix the embedding $\Phi:H\To M(2,\q\sqrt{a})$ with
\begin{eqnarray}\label{3}
  \Phi(x_0+x_1i+x_2j+x_3ij)=\left(
                              \begin{array}{cc}
                                x_0+x_1\sqrt{a} & x_2+x_3\sqrt{a} \\
                                b(x_2-x_3\sqrt{a}) & x_0-x_1\sqrt{a} \\
                              \end{array}
                            \right)
\end{eqnarray}
For an indefinite quaternion algebra $\;H\;$ over $\;\mathbb{Q}\;$  denote the discriminant by $d_H=d=p_1p_2...p_{2m}$.  By the Chinese Reminder Theorem, we can choose an integer $a$ such that $\left(\frac{a}{p_i}\right)=-1$ for all $p_i>2$. Furthermore by Dirichlet's Theorem on primes in an arithmetic progression one can assume $a$ is a prime $p$ such that $p\equiv 5$ (mod 8). For such a $p$, the Hilbert symbol:
\beq
(p,d)_{p_i}=(p,p_i)_{p_i}=\left(\frac{p}{p_i}\right)=-1, \hbox{ for all }p_i>2.
\eeq
Now $p\equiv5 (\bmod ~8)$ implies $\left(\frac{2}{p}\right)=-1$. Since $p\equiv1 (\bmod 4)$ we have $\left(\frac{p}{p_i}\right)\left(\frac{p_i}{p}\right)=~1$, and therefore
\beq
(p,d)_p=\prod_{i=1}^{2m}\left(\frac{p_i}{p}\right)=1.
\eeq
Hence $\left(\frac{p,d}{\q}\right)$ ramifies at all primes dividing $d$, and is  unramified at $p$, and at all other odd primes. So $\left(\frac{p,d}{\q}\right)$ has the same discriminant as $H$, and $H\cong \left(\frac{p,d}{\q}\right)\cong \left(\frac{p,-d}{\q}\right)$. We will make use of this representation of an indefinite rational quaternion algebras.

\section{Quaternion Orders}

\begin{defn}
  An $R$--order $\od$ in a quaternion algebra $H/K$ is a complete $R$--lattice in $H$ which is also a ring with unity.
\end{defn}

By an {\it integer} of $H$ we mean an element $a\in H$ such that $\hbox{{\rm tr}}(a)$ and $n(a)$ are in $R$. Each element of an order is an integer in $H$.  For an order $\od$, we define the dual lattice as
\begin{eqnarray*}
  \od^{\#}=\{x\in H ;\,\, \hbox{{\rm tr}}(x\od)\s R\}
\end{eqnarray*}
which is a complete $R$--lattice in $H$.

\begin{defn}
  Let $\od$ be an order of $H$. The reduced discriminant of $\od$, which denotes by $d(\od)$, is the ideal $n(\od^{\#^{-1}})\s R$.
\end{defn}

One can prove that if $\od=R[x_1,...,x_4]$, then $n(\od^{\#^{-1}})^2=R\det[ \hbox{{\rm tr}}(x_ix_j)]_{i,j},$ $1\leq i,j \leq 4$. So for two orders $\od'\s \od$, the discriminants satisfies
\begin{eqnarray*}
  d(\od')=[\od:\od']d(\od)
\end{eqnarray*}
and $d(\od)=d(\od')$ implies that $\od=\od'$. We say that an order is maximal, if it is not properly contained in any other order. An Eichler order is the intersection of two maximal orders.

\begin{prop}
  Let $H=\{L,\theta\}$ be a quaternion algebra over a local field $K_v$, and $\mathfrak{p}$ be the maximal ideal of $R_v$. Let $\od$ be a maximal order in $H$. If $H\cong M(2,K_v)$, then $\od$ is conjugate to $M(2,R_v)$, and $d(\od)=R_v$. Otherwise $H=L+L\omega$ is a division algebra, $\od=R_L+R_L\omega$ is unique, and $d(\od)=\mathfrak{p}$.
\end{prop}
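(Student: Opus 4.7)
The statement splits into the two cases distinguished by whether $H$ is split or a division algebra, and I would handle them separately. For the split case $H\cong M(2,K_v)$, the plan is to identify $H$ with $\mathrm{End}_{K_v}(V)$ for a two-dimensional $K_v$-vector space $V$, and then invoke the standard correspondence between maximal orders in $\mathrm{End}_{K_v}(V)$ and full $R_v$-lattices $\Lambda\subset V$: each such $\Lambda$ yields the maximal order $\od_{\Lambda}=\{\phi:\phi(\Lambda)\subseteq\Lambda\}$, and every maximal order arises this way. Since $R_v$ is a discrete valuation ring, every lattice in $V$ is free of rank two, so any two such lattices are related by an element of $GL(V)$; conjugating by this element identifies the corresponding orders, and taking $\Lambda=R_v^2$ gives $M(2,R_v)$. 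The discriminant then comes from a direct computation of the reduced trace form on the matrix-unit basis $\{e_{11},e_{12},e_{21},e_{22}\}$, whose Gram matrix
\[
\begin{pmatrix} 1 & 0 & 0 & 0 \\ 0 & 0 & 1 & 0 \\ 0 & 1 & 0 & 0 \\ 0 & 0 & 0 & 1 \end{pmatrix}
\]
has determinant $-1\in R_v^{\times}$, so $d(\od)^2=R_v$ and hence $d(\od)=R_v$.

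For the division case, I would first check that $\od=R_L+R_L\omega$ is an $R_v$-order. Multiplicative closure follows from the relation $\omega\ell=\bar\ell\omega$, which gives $\omega R_L=R_L\omega$ since the nontrivial Galois automorphism of the unramified extension $L/K_v$ preserves $R_L$, together with $\omega^2=\pi\in R_v$. It is visibly a free $R_v$-lattice of rank four with basis $\{1,\alpha,\omega,\alpha\omega\}$, where $\alpha\in R_L$ is chosen so that $R_L=R_v[\alpha]$. The substantive step is \emph{uniqueness}, for which I would extend the normalized valuation $v$ on $K_v$ to a function $w:H^{\times}\to\tfrac{1}{2}\Z$ by $w(h)=\tfrac12 v(n(h))$, and verify that $w$ is a valuation on $H$. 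The triangle inequality reduces to the subfield case: any $h\in H$ lies in the quadratic $K_v$-subalgebra $K_v(h)$, where $w$ restricts to the unique extension of $v$. The corresponding valuation ring $\{h\in H:w(h)\geq 0\}$ is therefore a subring of $H$ containing every order (since order elements are integers, for which $w\geq 0$), and a direct comparison identifies it with $\od$; this delivers both maximality and uniqueness simultaneously.

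Finally, for the discriminant in the division case, I would compute the Gram matrix of the reduced trace form on $\{1,\alpha,\omega,\alpha\omega\}$. Using $\omega\ell=\bar\ell\omega$, $\omega^2=\pi$, and the fact that any element of $L\omega$ has zero reduced trace, the Gram matrix becomes block diagonal with two $2\times 2$ blocks, and its determinant simplifies to $-\pi^2(\alpha-\bar\alpha)^4=-\pi^2 d_{L/K_v}^2$. Since $L/K_v$ is unramified, $d_{L/K_v}$ is a unit in $R_v$, so $d(\od)^2=\pi^2 R_v$ and therefore $d(\od)=\pi R_v=\mathfrak{p}$. The step I expect to be the main obstacle is precisely the verification that $w$ is a valuation on all of $H$: this is where the hypothesis that $H$ is a division algebra is genuinely used (to guarantee $n(h)\neq 0$ for $h\neq 0$), and where the reduction to the quadratic subfield must be carried out cleanly.
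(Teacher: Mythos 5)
Your proposal is correct; note that the paper itself offers no proof of this proposition (it is stated as a standard fact, essentially Vign\'eras Ch.\ II or [M-R] Ch.\ 6), so there is no in-paper argument to compare against, and your route is the classical one: lattice classification plus a trace-form computation in the split case, and the valuation $w=\tfrac12 v\circ n$ in the division case. Both discriminant computations check out: the Gram determinant $-1$ for the matrix units, and $-\pi^2(\alpha-\bar\alpha)^4$ for $\{1,\alpha,\omega,\alpha\omega\}$, with $(\alpha-\bar\alpha)^2$ a unit precisely because $L/K_v$ is unramified. The only step stated too loosely is the triangle inequality for $w$: $h_1+h_2$ does not in general lie in a common quadratic subalgebra with $h_1$ and $h_2$, so you must first write $w(h_1+h_2)=w(h_1)+w(1+h_1^{-1}h_2)$ and apply the subfield argument to the single element $h_1^{-1}h_2\in K_v(h_1^{-1}h_2)$; likewise, identifying the valuation ring with $R_L+R_L\omega$ deserves the one-line parity argument ($v(n(l_1))$ even, $v(\pi n(l_2))$ odd, so $w(l_1+l_2\omega)=\min(w(l_1),w(l_2\omega))$ and nonnegativity forces $l_1,l_2\in R_L$). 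These are standard repairs and do not affect the correctness of the overall argument.
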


Let $K$ be a number field. An  $R$--order $\od$ in $H/K$ is a maximal order if and only if $\od_p:=R_p\ten \od$ is a maximal  $R_p$--order for each place $p$ of $K$. This follows from:
\begin{eqnarray*}
\od=H \cap \od_2\cap\od_3\cap ...\cap\od_p\cap...\;\cdot
\end{eqnarray*}
From this, and the relation $d(\od_p)=d(\od)_p$, we conclude that
\beq \od \hbox{ is a maximal order } \Leftrightarrow d(\od)=
\prod_{p\in Ram(H)} p=d(H). \eeq This gives a criterion for the
characterization of maximal orders by their discriminants.

\section{Arithmetic Groups}
Let $\h=\{z\in \Complex \,\,;\,\Im(z)>0\}$ be the Poincar\'{e} half-plane with the hyperbolic metric
\beq
ds^2=\frac{dx^2+dy^2}{y^2}.
\eeq
 Let $\od$ be an order in $H=\left(\frac{a,b}{\q}\right)$, and
\beq
\od^1=\{x\in \od;\,\,n(x)=1\}
\eeq
be the group of proper units of it. Under $\Phi$ (see \ref{3}) the image of $\od$ lies in $PSL(2,\rl)$. One can show that this group is a discrete subgroup of $PSL(2,\rl)$ [M-R, Chapter 8]. $PSL(2,\rl)$ acts on $\h$ by M\"{o}bius transformations
\beq
g\cdot z=\frac{az+b}{cz+d}, \hbox{ for } g=\left(
                                             \begin{array}{cc}
                                               a & b \\
                                               c & d \\
                                             \end{array}
                                           \right)\in PSL(2,\rl)\cdot
\eeq
Then $\Gm:=\Phi(\od^1)$ also acts on $\h$ properly discontinuous and freely, i.e. has no fixed points. Since we can transform $\h$ to the unit disc $\mathcal{U}$ by the action of matrix $\psi=\left(
   \begin{array}{cc}
     i & 1 \\
     1 & i \\
   \end{array}
 \right)$, which sends the $i$ to the origin, the conjugation of $\Gm$ by $\psi$ gives an embedding $\bar \Gm$ of $\od^1$ in $SU(1,1)$.
\begin{defn}
  Given $\gm=\left(
               \begin{array}{cc}
                 a & b \\
                 c & d \\
               \end{array}
             \right)
\in SL(2,\Complex)$ such that $c\neq0$, the circle $C_\gm :=\{z\in \Complex\,;\,\,|cz+d|=1\}$ is called the isometric circle of $\gm$. We denote by $r_\gm$ and $o_\gm $ its radius  and center, respectively.
\end{defn}
The isometric circle is characterized by the property that $\gm$ acts on it as a Euclidean isometry. By a straightforward computation, the center and the radius of $C_\gm$ are the numbers $o_\gm=-d/c$ and $r_\gm=1/|c|$, respectively.

The action of $PSL(2,\rl)$ extends to $\h\cup\rl\cup\{\infty\}$, and we have three types of elements in the group action, depending on their fixed points. Fixed points are determined by the solutions of
\beq
f(z)=cz^2+(d-a)z-b=0.
\eeq
We have three cases according to the sign of the discriminant $\delta(f)=(a+d)^2-4$.
\begin{defn} Assume $\gm=\left(
                           \begin{array}{cc}
                             a & d \\
                             b & c \\
                           \end{array}
                         \right)$ defines a transformation different from $\pm$Id. $\gm$ is
   \begin{itemize}
       \item hyperbolic, if it has two distinct fixed points in $\,\rl\,\cup\,\infty\,\,\,$, equivalently if $\,\,|\hbox{{\rm tr}}(\gm)|>2$;
       \item  elliptic, $\,\,$if it has two complex conjugate fixed points $\,\,$, equivalently if $\,\,|\hbox{{\rm tr}}(\gm)|\,\,<2$;
       \item parabolic, $\,\,$ if it has $\,$ a $\,$ unique fixed point in $\,\,\rl\,\cup\,\infty\,\,$,  equivalently if $|\hbox{{\rm tr}}(\gm)|=2$.
     \end{itemize}
\end{defn}

By using the Hasse-Minkowski Principle on quadratic forms, one can prove that the quaternion groups, i.e. groups arising from the proper units of an order in an indefinite quaternion algebra have no parabolic elements. Elliptic and hyperbolic elements are diagonalizable as    $ \left(
  \begin{array}{cc}
    \Lm & 0 \\
    0 & \Lm^{-1} \\
  \end{array}
\right)$. For elliptic elements, $\Lm^2=e^{i \theta}$, and they are torsion elements, and for hyperbolic elements $\gm\neq1$ is real. In the case of quaternion groups elliptic elements are only of orders 4 or 6,
\begin{defn}\label{6}
A connected closed polygon $\mathcal{D}\s\h\cup\rl\cup\{\infty\}$ is a fundamental domain for the action of $\Gm$ on $\h$ if any two points $z,z'$ in the interior of $\mathcal{D}$ are not $\Gm$--equivalent, i.e. there is not any $\gm\in \Gm$ such that $z=\gm(z')$, and if each point in $\h$ is $\Gm$--equivalent to some point in $\mathcal{D}$.
\end{defn}

Ford describes a method to construct a fundamental domain for the action of any discrete subgroup of $PSL(2,\rl)$ on the unit disc by using their isometric circles [F1]. His method is applicable to quaternion groups of interest in this paper and the resulting fundamental domains have finitely many sides. In Ford's method the region which is exterior to all isometric circles of elements of $\Gm$ constitute a fundamental domain of the action of $\bar\Gm$ on the unit disc. The set of elements $\gm$ corresponding to the sides of this region give us a set of generators of $\Gm$; however this set is not necessarily  minimal.

By attaching a Dedekind zeta function to a maximal order $\od$ in an indefinite quaternion algebra, and relating the residue of its pole at $s=1$ to the hyperbolic area of the fundamental domain of the action of the corresponding unite group $\Gm$ on $\h$, Eichler [E2] showed that
\begin{eqnarray}\label{9}
\hbox{{\rm Vol}}(\Gm\backslash\h)=\frac{\pi}{3}\prod_{p\mid d(\od)}(p-1)
\end{eqnarray}
Vign\'{e}ras [V] recovered this result by using local-global Tamagawa measures and adelic techniques, and computed the area of the fundamental domain for a maximal order with respect to the Poincar\'e arithmetic metric $d\bar s:=ds/2\pi$. With this normalization the volume is the rational number $\frac{1}{6}\prod_{p|d(\od)}(p-1)=\frac{1}{6}\phi(d(\od))$. (In [E2] and [V] these computations are considered for the wider class of Eichler orders of level $N$)

\section{Fundamental Domains And Generators}
For finding the Ford fundamental domain and a set of generators, one computes isometric circles for elements $\bar\Gm$. For a computer implementation of this, it is necessary to order elements of $\bar\Gm$. Then isometric circles are computed one at a time according to this order until a closed domain $\mathfrak D$ is obtained. This $\mathfrak D$ is not necessarily a fundamental domain for $\bar\Gm\backslash \mathcal{U}$, and the actual fundamental domain may be smaller since the isometric circles of the other elements of $\bar\Gm$ may cut across $\mathfrak D$. To address these problems we proceed as follows.

An upper bound for the generators is obtained in [Ch2] and [Ch-K]. Defining the norm of $\gm\in\Gm\s SLS(2,\rl)$ as
\beq
||\gm||=a^2+b^2+c^2+d^2\,\,\,\,\hbox{ for }\,\,\,\gm=\left(
                                                       \begin{array}{cc}
                                                         a & b \\
                                                         c & d \\
                                                       \end{array}
                                                     \right),
\eeq
and assuming that $\Gm\backslash\h$ is  compact, Chalk proved that there exists a set of generators $\{A_1,...,A_N\}$ for $\Gm$ with the following properties:

\begin{enumerate}
  \item $||A_1||\leq||A_2||\leq...\leq||A_N||$
  \item $||A_1||\leq c_1N$
  \item $||A_{i+1}||\leq c_2N||A_i||^5\,\,\,\,(i=1,...,N-1)$.
\end{enumerate}
He also computes the explicit constants $c_1$ and $c_2$ in the case of maximal orders of an indefinite quaternion algebra over $\q$ [Ch-K], and obtained the  the following bounds for the generators of the group of proper units of maximal orders in a quaternion algebra with the representation $H=\left(\frac{p,d_H}{\q}\right)$:
\begin{align*}
&N\leq 6+\phi(d_H),\\
&||A_1||< 2+4\pi^{-1}N,\\
&||A_{i+1}||< \,\,\frac9 {64} \,\cdot\,\frac{p^2}{d_H^2}\,\,N\,\,||A_i||^5.
\end{align*}
The bound for $||A_1||$ is improved in  [Ch2] and [Ch-A]. The exponent $5$ for $||A_{i+1}||$ in terms of $||A_i||$ yields a crude and practically inapplicable estimate.\\

For a faster implementation of the construction of the true fundamental region $\mathcal{D}$ we use two methods described by Hutchinson [Ht] and Corrales et.al.[Co]. Let $F(z)=1-z\bar z$, then $F>0$ represents the interior of the unit disc. One observes that if $|z|<|z'|$, then $|F(z)|>|F(z')|$. Therefore among the points of isometric circle $C_\gm$ associated to any $\gm =\left( \begin{array}{cc} a & \bar c \\ c & \bar a \\\end{array} \right)\in\bar\Gm$, $F$ assumes a maximum on the nearest point to the origin which we denote by $z_\gm$. The boundary of $\mathcal D$ consist of arcs of isometric circles that are closest to the origin and $F$ assumes its maxima on these circles. We have:
\begin{eqnarray}
F(z_\gm)=\frac 2{|a|+1}.
\end{eqnarray}
So the boundary circles for $\mathcal{D}$ occur among those $\gm$'s determined by the smallest values of $|a|$. Hence, we start with the small values for $|a|$, and compute the required norm for $c$ by $|c|=\sqrt{|a|^2-1}$, and the above descriptions assure us that the first time we attain a closed region, it will be a fundamental domain for $\bar\Gm$.

The method of Corrales et.al. is based on the assumption that $\Gm$ is cocompact. If $c\in \h$ is not a fixed point of $\Gm$, and $\rho $ denotes the hyperbolic distance, define
\beq
D_g(c):=\{x\in\h\,\,;\,\, \rho(x,c)\leq \rho(x,g(c))\},\,\,\,\,\,\,\,g\in \Gm
\eeq
to be the half plane containing $c$. Then
\beq
D_\Gm(c):=\bigcap_{1\neq g\in \Gm} D_g(c)
\eeq
is known as a Dirichlet fundamental domain of $\Gm\backslash\h$. Now for a positive real number $r$ set
\begin{eqnarray}\label{4}
D_r(c)=\bigcap \{D_g(c)\,\,|\,\,1\neq g\in \Gm \hbox{ with }g(c)\in \bar B(c,r)\}
\end{eqnarray}
where $\bar B(c,r)=\{x\in \h\,\,;\,\,\rho (x,c)\leq r\}$. Since $D_\Gm(c)$ is compact, there exist a positive number $R$ such that $D_\Gm(c)= D_R(c)$. For finding such $R$ we let $k_1,k_2,k_3,...$ be an increasing sequence of positive numbers (goes to infinity), and compute $D_{k_i}$'s until $D_{k_n}$ is compact for some $k_n$. By the relation [B, Theorem 4.2.1]:
\begin{eqnarray}\label{7}
||g||=2\cosh\rho(i,g(i))
\end{eqnarray}
the computation of $D_{k_i}$'s is reduced to those elements $g\in \Gm$ for which $||g||<k_i$. Now set $R=2(\max(\{k_n/2\}\cup\{\rho(c,x)\,\,;\,\,x\in D_{k_n}(c)\}))$. For this $R$ we see that $\mathcal D\subseteq D_{k_n}(c)\s \bar B(c,R/2)$. Also for any $g\in\Gm$ which $\rho(c,g(c))>R$, we have $\bar B(c,R/2)\s D_g(c)$, because if there is some $x\in \bar B(c,R/2)\backslash D_g(c)$, then $\rho( x,g(c)) < \rho(x,c) \leq R/2$, and so $R< \rho(c,g(c))\leq \rho(c,x)+\rho(x,g(c)<R$, a contradiction. It follows that for the computation of the fundamental domain and the determination of generators, it suffices to consider elements $g\in\Gm$ with entries of $g$ bounded by $R$.\\

The method [Co] is computationally intensive since it requires the calculation of all the domains $D_{k_i}$. By a good choice of $k_1$ the computational burden can be reduced. This method can be translate to the Ford fundamental domain, if instead of (\ref{7}) we use the relation:
\begin{eqnarray}\label{5}
r_{\bar\gm}= \frac{1}{\sqrt{||\gm||-2}},~~~~~\hbox{ for }\bar\gm\in\bar\Gm.
\end{eqnarray}
Since $\mathcal D$ is compact, then there is a disc
\beq
\mathcal U_\eps=\{z\in \Complex \,\,;\,\,|z|\,<\,1-\eps\,\},\; 0<\eps<1,
\eeq
such  that $\mathcal D\s \mathcal U_\eps$, or equivalently
for a set of generators $\{\bar\gm_i\}$ we have $r_{\bar\gm_i}>\varepsilon$. So any good $\eps$ which estimates $r_{\bar\gm_i}$'s, gives an upper bound for entries of $\gm_i$'s. Since we want that $\mathcal D\s \mathcal U_\eps$, a crude bound can be assumed with respect to the area of $\mathcal{D}$ as:
\begin{eqnarray}\label{2}
\eps<\frac{1}{k}\left(1-\sqrt{\frac{\hbox{{\rm Vol}}(\mathcal{D})}{2+\hbox{{\rm Vol}}(\mathcal{D})}} \,\, \right)
\end{eqnarray}
where $k$ is larger than 2. In order to apply this method, it is
necessary to know the area of $\mathcal{D}$. In the next section we introduce a method for computing {\rm Vol}$(\mathcal D)$ for quaternion algebras over $\mathbb{Q}$.

\section{The Main Illustration}

Here we want to work in a special class of examples, and apply the above methods to obtain a bound on the trace and norm of a set of generators. We consider integer solutions of the quadratic Diophantine equation
\beq
x_0^2-ax_1^2-px_2^2+apx_3^2=1
\eeq
where $p$ is an odd prime, and $a<p$ is a quadratic non-residue mod $p$. Let
\beq
\Gm_{p,a}=\left\{\left(
             \begin{array}{cc}
               x_0+x_1\sqrt a & \sqrt p(x_2+x_3\sqrt a) \\
               \sqrt p(x_2-x_3\sqrt a) & x_0-x_1\sqrt a \\
             \end{array}
           \right)\,\,|\,\,\begin{array}{c}
                             x_0^2-ax_1^2-px_2^2+apx_3^2=1 \\
                             x_i\in \Z\,\,\,\forall i=0,...,3
                           \end{array}
           \right\}.
\eeq
Then $\Gm_{p,a}$ is the group of units of the \textit{canonical} order $\od_{p,a}=\Z[1,i,j,ij]$ in the quaternion algebra $H=\left(\frac{p,a}\q\right)$, and $x_0^2-ax_1^2-px_2^2+apx_3^2$ is the norm on $H$.
\begin{lem}
  For $p\equiv1$ (mod $4$) the group $\Gm_{p,a}$ is torsion free.
\end{lem}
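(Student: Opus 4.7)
The plan is to enumerate the possible torsion orders in $\Gm_{p,a}$, translate each into an arithmetic condition on the coordinates $(x_0,x_1,x_2,x_3)$, and rule them out using the hypothesis that $a$ is a quadratic non-residue modulo $p$ together with $p\equiv1\pmod{4}$.

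First I would use the remark recalled in \S3 that elliptic elements of a rational quaternion group can only have order $3$, $4$ or $6$. If $q=x_0+x_1i+x_2j+x_3ij\in\eo_{p,a}^1$ is a non-trivial torsion element, then $q$ satisfies its reduced characteristic polynomial $X^2-\hbox{{\rm tr}}(q)X+1$ over $\q$, and this polynomial must coincide with a cyclotomic polynomial $\Phi_d$ with $\phi(d)=2$, i.e.\ $d\in\{3,4,6\}$. Correspondingly $\hbox{{\rm tr}}(q)\in\{-1,0,1\}$. Since $\hbox{{\rm tr}}(q)=2x_0$ with $x_0\in\Z$, the odd values $\pm1$ are immediately impossible, so the only remaining case is $x_0=0$ and $q^2=-1$ (order $4$ elements).

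Next I would reduce the problem to a diophantine statement: the existence of a non-trivial torsion element is equivalent to integer solutions of
\beq
-ax_1^2-px_2^2+apx_3^2=1.
\eeq
Reducing both sides modulo $p$ gives $-ax_1^2\equiv 1\pmod{p}$. If $x_1\equiv 0\pmod{p}$ this is impossible, so $x_1$ is invertible modulo $p$ and we obtain $-a\equiv (x_1^{-1})^2\pmod{p}$. Hence $-a$ is a quadratic residue modulo $p$, i.e.\ $\left(\frac{-a}{p}\right)=1$.

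Finally, the hypothesis $p\equiv 1\pmod{4}$ makes $-1$ a quadratic residue modulo $p$, so $\left(\frac{-a}{p}\right)=\left(\frac{a}{p}\right)$, forcing $\left(\frac{a}{p}\right)=1$. This contradicts the standing assumption that $a$ is a quadratic non-residue modulo $p$. Thus no non-trivial torsion element exists, and $\Gm_{p,a}$ is torsion free. The main content of the argument is just the quadratic reciprocity style computation modulo $p$; the only point that needs care is confirming that orders $3$ and $6$ are incompatible with the integrality of the coordinates, which is immediate from $\hbox{{\rm tr}}(q)=2x_0$.
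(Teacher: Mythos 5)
Your proposal is correct and follows essentially the same route as the paper: both reduce to showing that a non-trivial torsion element must have $x_0=0$ (you via cyclotomic polynomials, the paper via the eigenvalues $e^{\pm i\theta}$ and the parity of $\hbox{{\rm tr}}=2x_0$), and both then derive the contradiction by reducing the norm equation mod $p$ and using that $-1$ is a quadratic residue while $a$ is not. No substantive difference.
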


\begin{proof}
Let Id$\neq\gm\in \Gm_{p,a}$ be a torsion element. Then its eigenvalues are $e^{\pm i\theta}$ and the characteristic equation is $\Lm^2 -(2\cos\theta) \Lm+1=0$. Therefore $x_0=0,\pm 1$, and $\gm$ is conjugate to a rotation matrix. In the latter case, $x_0=\pm1$, $\gm$ is conjugate to $\pm $Id, and so $\gm=\pm $Id. In the former case, $x_0=0$, the norm form becomes $1+ax_1^2+px_2^2= apx_3^2$. Reducing mod $p$ gives us
\beq
1+ax_1^2\equiv 0 \hbox{ mod }p
\eeq
which is not possible, since $-1$ is a quadratic residue and $a$ is not.
\end{proof}
\begin{rem}
The trace of an element of $\Gm_{p,a}$ is an even integer. Since an element of
finite order is conjugate to a rotation matrix, every non-trivial element of finite order in $\Gm_{p,a}$ necessarily has trace zero. Let $\Gm^0_{p,a}$ be the subgroup of $\Gm_{p,a}$, consisting of elements with $x_2\equiv 0 \hbox{ mod }a$. It is trivial that such matrices form a subgroup of finite index in $\Gm_{p,a}$. For $\gm\in\Gm^0_{p,a}$ we have $x_0\equiv 1$ mod $a$, and therefore $\Gm^0_{p,a}$ is torsion free.
\end{rem}
We apply Hutchinson's method to the given $\Gm_{p,a}$. First we transform $\Gm_{p,a}$ to the group $\bar\Gm_{p,a}$ which acts on the unit disc:
\beq
\gm=x_0+ix_1+jx_2+ijx_3\mapsto \bar\gm=\left(
             \begin{array}{cc}
               x_0+i x_3\sqrt{ap} & x_1\sqrt a-i x_2\sqrt p \\
               x_1\sqrt a+i x_2\sqrt p & x_0-i x_3\sqrt{ap} \\
             \end{array}
           \right)\in SU(1,1)
\eeq
Since the Hutchinson's method works step by step to find the fundamental domain, we wrote a computer program which generates $\bar\gm\in \bar\Gm_{p,a}$, and stops when a compact domain was obtained. The following four diagrams are the bounds founded by the program experimentally for $x_0$'s, and number of generators, for some pairs $(p,a)$. It is clear that the bounds for $x_1,x_2$ and $x_3$ are comparable to those for $x_0$.\\

\begin{figure}[p]
  \includegraphics[width=12.8cm]{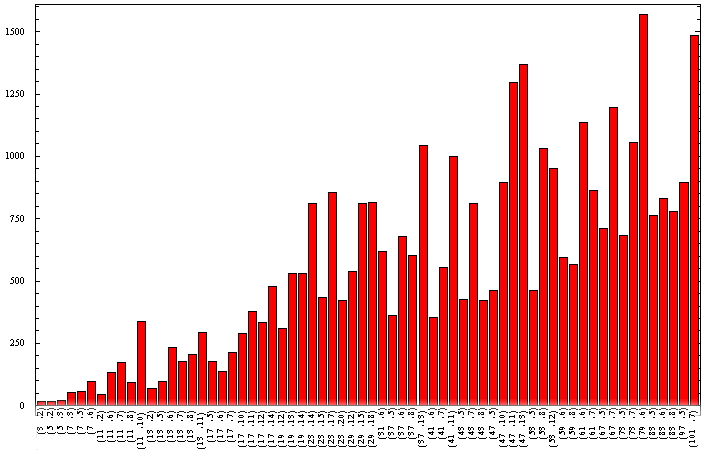}\\
  \caption{Number of generators plotted against the pairs $(p,a)$}
\end{figure}
\begin{figure}[p]
  \includegraphics[width=12.8cm]{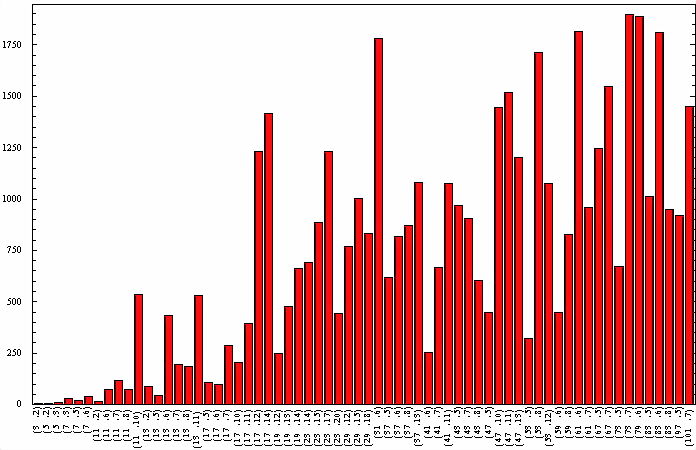}\\
  \caption{Bounds for $x_0$'s plotted against the pairs $(p,a)$}
\end{figure}
\begin{figure}[p]
  \includegraphics[width=12cm]{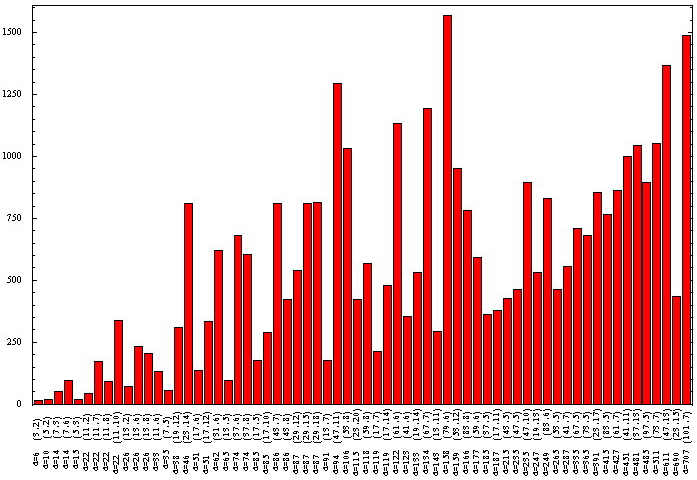}\\
  \caption{Number of generators ordered by discriminant $d_H$}
\end{figure}
\begin{figure}[p]
  \includegraphics[width=12cm]{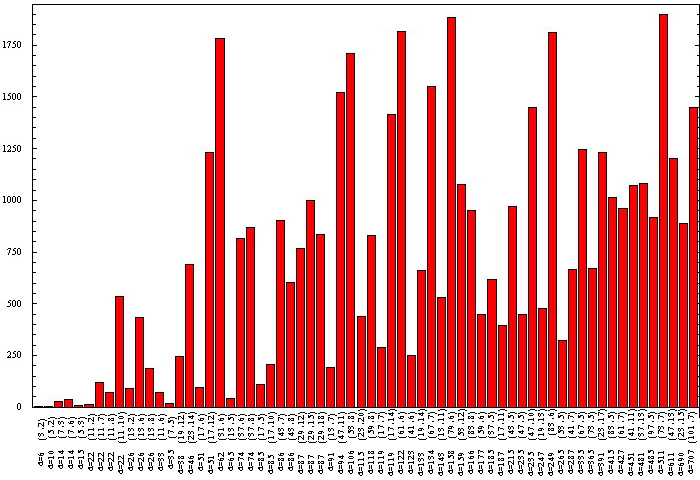}\\
  \caption{Bounds for $x_0$'s ordered by discriminant $d_H$}
\end{figure}

For these quaternion algebras, we can also apply the method of Johansson to find good bounds for $x_i$'s. In order to do this we compute the area of the fundamental domain of the group of proper units $\mathcal M^1$ of a maximal or Eichler order $\mathcal{M}\supset \od_{p,a}$ on $\mathcal{U}$, and then apply the formula [V]:
\beq
\hbox{{\rm Vol}}(\bar\Gm_{p,a}\backslash\mathcal{U})=[M^1:\Gm_{p,a}]\,\,\hbox{{\rm Vol}}(\bar{\mathcal{M}}^1\backslash\mathcal{U})
\eeq
to determine the area of the $\bar\Gm_{p,a}$. Then (\ref{5}) and (\ref{2}) give bounds for the entries of $\gm$'s. The main problem here, is to compute the index $[\mathcal M^1:\Gm_{p,a}]$. There are two distinct methods for this purpose. One is given in [Ch1]. This method is based on the reducing problem to the theorems about quadratic forms on finite fields by using [Ch1, Theorem 2]
\beq
[\mathcal M^1:\Gm_{p,a}]=\prod_{q\,\,\hbox{\tiny{\rm prime}},\,\,q|2m}[(\mathcal M^1)_{q^n}:(\Gm_{p,a})_{q^n}]
\eeq
where $m $ is an integer such that $m\mathcal M\s\od_{p,a}$, $n=v_q(m)$ is the $q$--adic valuation of $m$ if $q$ is odd, and $n=\max(8,v_2(m))$ when $q=2$, and
\beq
(\Gm_{p,a})_{q^n}=\{u\in\od_{{p,a}_q}\,\,;\,\,n(u)\equiv1 (\hbox{ mod } q^n)\},\\
(\mathcal M^1)_{q^n}=\{v\in \mathcal M_q\,\,;\,\,n(v)\equiv1 (\hbox{ mod }q^n)\}.
\eeq

For the computation of index, we combine the method described in [J3], the relation [V, IV.1.7]:
\beq
[\od^1:\od'^1]=\prod_p[\od^1_p:\od'^1_p] ~~\;\;~~~~~~\hbox{ which } ~~~\od'\s\od,
\eeq
where $\od_p=\od\otimes R_p$, and the Eichler invariant in the localizations of the orders. The Eichler invariant for $\od_p\neq M(2,R_p)$ is defined as
\beq
e(\od_p)=\left\{
           \begin{array}{ll}
              1 & \hbox{ if $\od_p/J(\od_p)\cong k_p\oplus k_p$;} \\
             0 & \hbox{ if $\od_p/J(\od_p)\cong k_p$;} \\
             -1 & \hbox{ if $\od_p/J(\od_p)$ is a quadratic extension of } k_p,
           \end{array}
         \right.
\eeq
where $k_p$ is the residue field of $K_p$ and $J(\od_p)$ is the Jacobson radical of $\od_p$. These are the only possibilities for $\od_p/J(\od_p)$, because the dimension of semisimple algebra $\od_p/J(\od_p)$ over $k_p$ is at most $4$. In fact, it cannot be a cubic or quartic field over $k_p$ because
this would be a simple separable extension of $k_p$, and it would follow from
Hensels lemma that this can be lifted to a field contained in the algebra over
the $K_p$ which is impossible. Also it cannot be a quadratic field
times $k_p$ because idempotents can also be lifted, and then the algebra over $K_p$ would contain a corresponding subalgebra which is again
impossible by the Double Commutator Theorem.  So the only possibilities are those listed above.

\begin{lem}
  In the rational quaternion algebra $H_p=H\otimes \q_p$, let two orders $\od_p\subsetneq \mathcal M_p$ be such that $\mathcal M_p$ is maximal in $H_p$ and $d(\od_p)=p^n$. Then
  \beq
  [\mathcal M_p^1\,:\,\od_p^1]=[\mathcal M_p^*\,:\,\od_p^*] / [\Z_p^*\,:\,n(\od_p^*)]
  \eeq
  and
  \beq
  [\mathcal M_p^*\,:\,\od_p^*]=\left\{
                                 \begin{array}{ll}
                                   p^{n-1}(p^2-1)(p-e(\od_p))^{-1} & \hbox{ if }H_p\cong M(2,\q_p),  \\
                                   p^{n-1}(p+1)(p-e(\od_p))^{-1} & \hbox{ if } H_p\cong \mathbb H_p.
                                 \end{array}
                               \right.\\
  \eeq
\end{lem}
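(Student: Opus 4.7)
The plan is to handle the two formulas separately, establishing the first by a short exact sequence argument and the second by comparing Haar measure densities on $H_p$.

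For the first formula, the key input is surjectivity of the reduced norm $n : \mathcal M_p^* \to \Z_p^*$. In the split case this is just surjectivity of $\det : GL_2(\Z_p) \to \Z_p^*$; in the ramified case $\mathcal M_p = R_{L_{nr}} + R_{L_{nr}}\pi$, and $N_{L_{nr}/\q_p}$ already surjects onto $\Z_p^*$ since $L_{nr}/\q_p$ is unramified. Restricting $n$ to $\od_p^*$, whose kernel is exactly $\od_p^1 = \od_p^* \cap \mathcal M_p^1$, yields the two short exact sequences
\[
1 \to \od_p^1 \to \od_p^* \xrightarrow{n} n(\od_p^*) \to 1, \qquad 1 \to \mathcal M_p^1 \to \mathcal M_p^* \xrightarrow{n} \Z_p^* \to 1,
\]
related by componentwise inclusions. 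Multiplicativity of indices across the two rows gives $[\mathcal M_p^* : \od_p^*] = [\mathcal M_p^1 : \od_p^1] \cdot [\Z_p^* : n(\od_p^*)]$, equivalent to the first claim.

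For the second formula, I would pass to Haar measure. Fix an additive Haar measure $\mu$ on $H_p$; since $\mathcal M_p^*$ and $\od_p^*$ are open compact subgroups of $H_p^*$, a simple rearrangement gives
\[
[\mathcal M_p^* : \od_p^*] \;=\; [\mathcal M_p : \od_p] \cdot \frac{\mu(\mathcal M_p^*)/\mu(\mathcal M_p)}{\mu(\od_p^*)/\mu(\od_p)}.
\]
Here $[\mathcal M_p : \od_p]$ equals $p^n$ when $H_p \cong M(2,\q_p)$ and $p^{n-1}$ when $H_p \cong \ham_p$, by the discriminant relation $d(\od_p) = [\mathcal M_p:\od_p]\, d(\mathcal M_p)$ recalled in \S 2. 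For any $R_p$--order $\mathcal A$ in $H_p$, the inclusion $1 + J(\mathcal A) \s \mathcal A^*$ has measure $\mu(J(\mathcal A))$ and index $|(\mathcal A/J(\mathcal A))^*|$ in $\mathcal A^*$, giving the ``unit density'' formula $\mu(\mathcal A^*)/\mu(\mathcal A) = |(\mathcal A/J(\mathcal A))^*|/|\mathcal A/J(\mathcal A)|$.

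Applied to $\mathcal M_p$ this yields density $(p^2-1)(p-1)/p^3$ in the split case (from $|GL_2(\mathbb F_p)|/|M_2(\mathbb F_p)|$) and $(p^2-1)/p^2$ in the ramified case (from $|\mathbb F_{p^2}^*|/|\mathbb F_{p^2}|$). Applied to $\od_p$, using the three possibilities for $\od_p/J(\od_p)$ catalogued immediately before the lemma, the density becomes $(p-1)^2/p^2$, $(p-1)/p$, or $(p^2-1)/p^2$ according as $e(\od_p) = 1, 0$, or $-1$ -- each fitting the uniform expression $(p-1)(p-e(\od_p))/p^2$. Substituting into the displayed ratio and simplifying yields $p^{n-1}(p^2-1)/(p-e(\od_p))$ in the split case and $p^{n-1}(p+1)/(p-e(\od_p))$ in the ramified case, exactly as claimed. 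The main obstacle is the bookkeeping in the residue-algebra step: one must check that the three Jacobson-quotient types really do assemble into the single formula $(p-1)(p-e(\od_p))/p^2$, and that the discriminant exponent is read off correctly in each case. Once this uniform packaging is in place, the rest is linear algebra and direct algebraic simplification.
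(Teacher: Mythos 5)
Your proposal is correct, and it is genuinely different from what the paper does: the paper gives no argument at all for this lemma, disposing of it with a citation (``See [Kr] Theorem 1'' --- a reference that, incidentally, does not appear in the bibliography). You have effectively reconstructed the standard local unit-index computation that underlies the cited result. Both halves check out. For the first identity, the index bookkeeping $[\mathcal M_p^*:\od_p^*]=[\Z_p^*:n(\od_p^*)]\cdot[\mathcal M_p^1:\mathcal M_p^1\cap\od_p^*]$ is the usual consequence of $f^{-1}(f(H))=H\ker f$, and you correctly identify $\mathcal M_p^1\cap\od_p^*=\od_p^1$ (an element of $\od_p$ of reduced norm $1$ is automatically a unit there, since its inverse is its conjugate $\mathrm{tr}(x)-x\in\od_p$); surjectivity of $n$ on $\mathcal M_p^*$ holds in both cases exactly as you say. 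For the second identity, the Haar-measure rearrangement is valid, though you should state the one fact it silently uses: left translation by $g\in\mathcal M_p^*$ preserves the additive Haar measure because its module is $|n(g)|_p^2=1$; without this, $\mu(\mathcal M_p^*)=[\mathcal M_p^*:\od_p^*]\,\mu(\od_p^*)$ is not automatic. The unit-density formula $\mu(\mathcal A^*)/\mu(\mathcal A)=|(\mathcal A/J)^*|/|\mathcal A/J|$ rests on the standard fact that $a\in\mathcal A$ is a unit iff its image mod $J(\mathcal A)$ is, and your three residue-algebra cases do assemble into $(p-1)(p-e(\od_p))/p^2$ (with the harmless caveat that $e(\od_p)=1$ cannot actually occur when $H_p\cong\ham_p$, since a division algebra has no nontrivial idempotents). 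The discriminant exponents $[\mathcal M_p:\od_p]=p^n$ resp.\ $p^{n-1}$ follow from $d(\od_p)=[\mathcal M_p:\od_p]\,d(\mathcal M_p)$ with $d(\mathcal M_p)=\Z_p$ resp.\ $p\Z_p$, as recalled in \S 2, and the final simplifications reproduce the stated formulas. What your route buys is a self-contained proof within the paper's own toolkit; what the citation buys is brevity and, in [Kr], the general number-field case rather than just $\q_p$.
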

\begin{proof} See [Kr] Theorem 1. \end{proof}

For an order $\od$ with the dual lattice $\od^\#$ and the dual basis $\{1,b_1,b_2,b_3\}$ we assign the quadratic form
\beq
  f_\od(X_1,X_2,X_3):=d(\od)\cdot n(X_1b_1+X_2b_2+X_3b_3).
\eeq
\begin{prop}
  Let $\od$ be an order in the rational quaternion algebra $H$, and $p$ be an odd prime.  If $f_\od=X_1^2+up^rX_2^2+wp^sX_3^2$ with $r\leq s$ and $(uw,p)=1$, then
\beq
 e(\od_p)=1\;\;\Longleftrightarrow\;\;r=0,\;s\geq1,\;\left(\frac{-u}p\right)=1,\\
 e(\od_p)=-1\;\;\Longleftrightarrow\;\;r=0,\;s\geq1,\;\left(\frac{-u}p\right)=-1.
\eeq
In this case $e(\od_2)=0$.
\end{prop}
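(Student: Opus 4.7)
My plan is to localize at $p$ and use the trichotomy for $\od_p/J(\od_p)$ recorded just before the proposition: it is either $k_p\oplus k_p$, $k_p$, or a quadratic extension of $k_p$. The normalized form $f_\od=X_1^2+up^rX_2^2+wp^sX_3^2$ reads off which possibility occurs through its $p$-adic shape, and the Legendre symbol $(-u/p)$ then separates the two $2$-dimensional cases.

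\textbf{Step 1 (rank of $f_\od$ mod $p$ determines $\dim \od_p/J(\od_p)$).} I would first argue that, after tensoring with $R_p$, the form $f_\od$ is, up to a unit scalar absorbed in the $X_1$ direction, the norm form on $\od_p^\#$ restricted to the complement of the identity. Reduced modulo $p$, the rank of this form equals $\dim_{k_p}\od_p/J(\od_p)-1$: the identity contributes the ``$X_1^2$'' direction to the semisimple quotient, while any diagonal entry with strictly positive $p$-adic valuation lies in $J(\od_p)$ after reduction. This gives three cases:
\begin{itemize}
  \item $r=s=0$: rank $3$, forcing $\od_p\cong M(2,R_p)$, which is excluded from the domain of the Eichler invariant;
  \item $r=0,\ s\geq1$: rank $2$, so $\od_p/J(\od_p)$ is a $2$-dimensional semisimple $k_p$-algebra;
  \item $r\geq 1$: rank $1$, so $\od_p/J(\od_p)\cong k_p$ and $e(\od_p)=0$.
\end{itemize}

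\textbf{Step 2 (distinguishing the two $2$-dimensional possibilities).} In the rank-$2$ case, the residual norm form on $\od_p/J(\od_p)$ is $X_1^2+\bar u\,X_2^2$ over $\F_p$. A two-dimensional commutative semisimple $\F_p$-algebra is either $k_p\oplus k_p$, whose norm form $xy$ is isotropic and equivalent to $X_1^2-X_2^2$, or $\F_{p^2}$, whose norm form $X_1^2-\delta X_2^2$ (for any non-residue $\delta$) is anisotropic. Since $X_1^2+\bar u X_2^2$ is isotropic over $\F_p$ iff $-u$ is a square mod $p$, I get $e(\od_p)=1$ precisely when $\left(\tfrac{-u}{p}\right)=1$, and $e(\od_p)=-1$ precisely when $\left(\tfrac{-u}{p}\right)=-1$.

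\textbf{Step 3 (converse and the claim at $p=2$).} The converse in each equivalence is automatic from Steps 1--2 since the classification of $\od_p/J(\od_p)$ by $\F_p$-dimension (and, in dimension $2$, by split/field dichotomy) is complete. For the final assertion $e(\od_2)=0$ in the canonical-order setting $\od=\od_{p,a}=\Z[1,i,j,ij]$ of \S 5, I would compute $\od_2/2\od_2$ directly: with $p\equiv 5\pmod 8$ and $a$ odd, one has $i^2\equiv p\equiv 1$, $j^2\equiv a\equiv 1$, and $ij\equiv ji\pmod 2$, so $\od_2/2\od_2\cong\F_2[x,y]/\bigl((x-1)^2,(y-1)^2\bigr)$ is a local commutative $\F_2$-algebra with residue field $\F_2$. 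Hence $\od_2/J(\od_2)\cong\F_2$ and $e(\od_2)=0$.

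\textbf{Main obstacle.} The delicate point is Step 1, namely the rigorous identification of the mod-$p$ form $f_\od$ (minus the identity direction) with the norm form carried by the semisimple quotient $\od_p/J(\od_p)$. This requires controlling how $J(\od_p)$ intersects the image of $\od_p^\#$, and invoking multiplicativity of the reduced norm to descend it to the quotient. Once that identification is secured, Steps 2--3 reduce to a standard quadratic-residue computation over $\F_p$ together with the explicit $\F_2$-algebra calculation for the $2$-adic claim.
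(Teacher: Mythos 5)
Note first that the paper itself does not prove this proposition: its ``proof'' is a citation to Theorem 4.3 and Propositions 5.8--5.9 of [J3] and Satz 10 of [E1]. So your sketch can only be measured against those sources, and in spirit it is on the right track --- the normalized ternary form $f_{\od}$ does determine $\od_p$ (for $\od_p$ Gorenstein, which the unit coefficient of $X_1^2$ forces, since it makes $f_\od$ primitive at $p$), and its reduction mod $p$ does determine $\od_p/J(\od_p)$. Your Step 2 (split versus field dichotomy for a two-dimensional semisimple $k_p$-algebra detected by isotropy of $X_1^2+\bar u X_2^2$, hence by $\left(\frac{-u}{p}\right)$) is correct, as is the explicit $2$-adic computation in Step 3 for the canonical order $\Z[1,i,j,ij]$ with $p,a$ odd.

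The genuine gap is Step 1, and not merely because it is ``delicate'': the mechanism you give for it is wrong as stated. The variables $X_1,X_2,X_3$ of $f_\od$ correspond to the dual-basis vectors $b_1,b_2,b_3$, which span a complement of the identity in $\od^{\#}$; none of them is ``the identity direction,'' so the unimodular summand $X_1^2$ cannot be attributed to $1\in\od_p$, and the residual binary form of Step 2 is not literally ``the identity plus one more direction.'' Worse, your asserted relation $\mathrm{rank}(\bar f_\od)=\dim_{k_p}\bigl(\od_p/J(\od_p)\bigr)-1$ is inconsistent with your own case table: rank $3$ goes with $M(2,k_p)$ of dimension $4$, but rank $2$ goes with a $2$-dimensional quotient and rank $1$ with a $1$-dimensional one, so no single affine formula relates the two quantities. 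The correct bridge, which is what [J3] actually uses, is the even Clifford algebra correspondence: for a Gorenstein order one has $\od_p\cong C_0(f_{\od_p})$, reduction mod $p$ commutes with $C_0$, the Jacobson radical of $C_0(\bar f)$ is generated by the products involving basis vectors in the radical of $\bar f$, and hence $\od_p/J(\od_p)\cong C_0$ of the nondegenerate part of $\bar f_{\od_p}$. For $r=0$, $s\geq 1$ this nondegenerate part is $X_1^2+\bar u X_2^2$ and $C_0$ of it is $k_p[T]/(T^2+\bar u)$, which feeds your Step 2; for $r\geq 1$ it is $X_1^2$ and the quotient is $k_p$; for $r=s=0$ it is the full ternary form and $C_0$ is $M(2,k_p)$. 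Without this structural input (or an explicit appeal to [J3, Thm.~4.3]) the passage from the $p$-adic shape of $f_\od$ to the isomorphism type of $\od_p/J(\od_p)$ --- the entire content of the proposition --- is unsupported.
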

\begin{proof} See Theorem 4.3 and Propositions 5.8 \& 5.9 in [J3], and Satz 10 in [E1]. \end{proof}

Finally Johansson obtained the following formula for the volume of a fundamental domain in the case of rational indefinite quaternion algebras [J1, Prop.5.10]:
\begin{align*}
\hbox{{\rm Vol}}(\od^1\backslash\h)=&\frac\pi 3\prod_{q|d_H}(q-1)\prod_{q|d(\od)}[\mathcal{M}^1_q:\od^1_q]\\=&\frac \pi 3 d(\od) \prod_{q|d(\od)}[\mathbb{Z}_q^*:N(\od_q^*)]^{-1}\frac{q^2-1} {q(q-e(\od_q))}
\end{align*}
which $\mathcal{M}$ is the maximal order of $H$ and $q$ is prime. In the case of $\od^1_{p,a}=\Gamma_{p,a}$ we have:
\beq
 f_{\od_{p,a}}=-X_1^2+aX_2^2+pX_3^2.
\eeq
Also for $q$ an odd prime $[\mathbb{Z}_q^*:N(\od_q^*)]=1$ since $p$ is a prime and $a<p$ (cf. [J1] Lemma 5.3).
\begin{figure}[]
  \includegraphics[width=13.7cm]{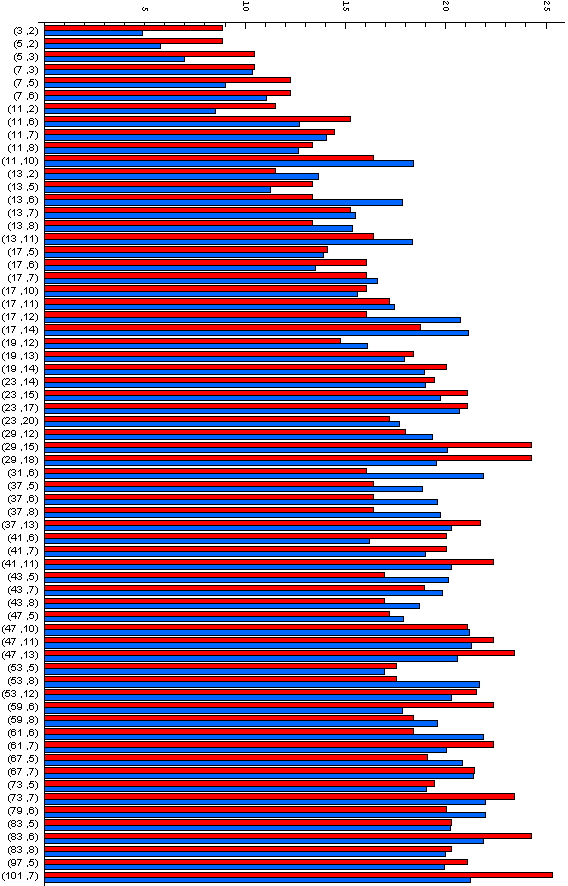}\\
  \caption{A comparison between the results of Hutchinson's method (the blue bars) and Johansson's bounds (the red bars), in logarithmic scale, about generators norm.}
\end{figure} For $q=2$ the index depends only on the power of $2$ in the decomposition of $a$.  In fact, $[\mathbb{Z}_2^*:N(\od_2^*)]=2$ or $1$ according as $4|a$ or $4\nmid a$, respectively. 

In order to evaluate the efficacy of Johansson's method, we compared the results with the exact answers one obtains from Hutchinson's method.  The results are shown in Figure 5 (it is plotted in logarithmic scale).  Note that Johansson's method can over-estimate or under-estimate by factors as large as 55 and 27 respectively, in the range of computed values.  The average discrepancy with the exact values is almost a factor of 4.

\section{Open Problems}
In this section we briefly describe three related open problems.

\begin{enumerate}
\item  One can similarly consider families of cocompact arithmetically defined discrete subgroup of $SL(2,\mathbb{C})$ acting properly discontinuously on the upper half space and ask analogous questions about the size of a set of generators for such groups.
\item The results in this note have been empirical and we do not know of an analogue of theorems of Schur and Siegel for units of quaternion algebras which are definite over $\mathbb{Q}$.    It is possible that analytical methods based on $L$--functions and representation theory can be used to obtain better estimates for the size of a set of generators.
\item Here we only considered discrete groups commensurable with the group of units of a quaternion algebra definite over $\mathbb{Q}$.  One may ask similar questions about subgroups of finite index in groups of units of orders in quaternion algebras definite over a totally real number field and acting properly discontinuously on a product of upper half planes.  It is an interesting problem whether the construction of [A-B] for certain Eichler orders can be extended to this case.
\end{enumerate}

\vskip .5 cm

\noindent {\bf Acknowledgement} - The author would like to thank Professors P. Bayer, Stefan Lemurell and Kleinert for very helpful communications, and especially Professor M. Shahshahani for suggesting the problem and consistent encouragement.

\vskip .8 cm
\setlinespacing{1.3}
\noindent {\bf Referenes}

\noindent [A-B] \textsc{Alsina}, M. and \textsc{Bayer}, P. ; {\it Quaternion Orders, Quadratic Forms, And Shimura Curves}, CRM Monograph Series(Vol. 22), 2004.

\noindent [B] \textsc{Beardon}, A.F. ; {\it The Geometry of Discrete Groups}, Springer-Verlag, 1983.

\noindent[Ch1] \textsc{Chalk}, J.H.H. ; Units of Indefinite Quaternion Algebras, {\it Proc. Royal Soc. Edingburgh}, vol. 87A(1980), 111-126.

\noindent [Ch2] ----------- ; Inequalities for Pell Equations and Fuchsian Groups, {\it Discrete groups and geometry (Birmingam, 1991)} , 26-35, London Math Soc, Lecture Note Ser. (173), {\it Cambridge Univ. Press}, 1992.

\noindent [Ch-A] ----------- and \textsc{Ashton},R.J. ; On The Representation of Integers by Indefinite Diagonal Quadratic Forms, {\it C.R. Math. Rep. Acad. Sci. Canada}, vol. 16(1994), no 1, 23-24.

\noindent [Ch-K] ----------- and \textsc{Kelly}, B.G.A. ; Generating Sets for Fuchsian Groups, {\it Proc Royal Soc. Edingburgh}, vol. 72A(1973), 317-326.

\noindent [Co] \textsc{Corrales}, Capi, et.al. ; Presentation of the unit group of an order in a non-split quaternion algebras, {\it Adv. Math.}, vol. 186(2004), no. 2, 498-524.

\noindent [E1] \textsc{Eichler}, M. ; Untersuchungen in der Zahlentheorie der rationalen Quaternionalgebren, {\it J. Reine Angew. Math.}, vol. 174(1936), 129-159.

\noindent [E2] ----------- ; {\it Lectures on Modular
Correspondences}, Tata Institute, Bombay (1955-6) (re-issued 1965).

\noindent [F1] \textsc{Ford}, L.R. ; The Fundamental Region For a Fuchsian Group, {\it Bull. Amer. Math. Soc.}, vol. 31(1925), 531-539

\noindent [Ht] \textsc{Hutchinson} , J. I. ; A Method for Constructing the
Fundamental Region of a Discontinuous Group of Linear
Transformations, {\it Trans. Amer. Math. Soc.}, vol. 8(1907), 261-269.

\noindent [J1] \textsc{Johansson}, S. ; Genera  of Arithmetic Fuchsian Groups, {\it Acta Arith.}, vol. 86 (1998), no. 2, 171-191.

\noindent [J2] ----------- ; On Fundamental Domains of Arithmetic Fuchsian Groups, {\it Mathematics of Computation}, vol. 69, no. 229, 339-349.

\noindent [J3] ----------- ; A Description of Quaternion Algebras, in: {\it Arithmetic and Geometry of Quaternion Orders and Fuchsian Groups}, Doctoral Thesis, G\"{o}tborg Univ., 1997.

\noindent [M-R] \textsc{Maclachlan}, C. and \textsc{Reid}, A. ; {\it The Arithmetic of Hyperbolic 3-Manifolds}, Springer-Verlag, 2003.

\noindent [V] \textsc{Vign\'eras}, M-F ; {\it Arithm\'etique des Alg\`ebres de
Quaternions}, Springer LNM No. 800, Berlin (1980).\\\\

School of Mathematics, Institute for Research in Fundamental Sciences (IPM), P.O.Box 19395-5746, Tehran, Iran.

{\it Email:} \verb"jahangiri"@ \verb"mail.ipm.ir"

 \hskip 1.168 cm \verb"jahangiri"@ \verb"gmail.com"

\end{document}